\newcounter{rem}
\newtheorem{lemma}{Lemma}[section]
\newtheorem{prop}[lemma]{Proposition}
\newtheorem{theorem}[lemma]{Theorem}
\newenvironment{rem}{\addtocounter{rem}{1}\par \noindent{\bf Remark \arabic{section}.\arabic{rem}} \sf}{\par}
\newcommand{\beqn}{\begin{equation}}
\newcommand{\eeqn}{\end{equation}}
\def\N{\mathbb{N}}
\def\span{\text{span\,}}
\def\cA{{\cal A}}
\def\cG{{\cal G}}
\def\cX{{\cal X}}
\def\<{\langle}
\def\>{\rangle}
\def\<{\langle}
\def\>{\rangle}
\begin{document}

\title{ On left democracy function}
\author{P. Wojtaszczyk  \thanks{The author was partially supported by the ``HPC
Infrastructure for Grand Challenges of Science and Engineering”
Project, co-financed by the European Regional Development Fund
under the Innovative Economy Operational Programme" and Polish NCN
grant DEC2011/03/B/ST1/04902.}
  }

\date{}

\maketitle

{ \large \sl \hfill To Lech Drewnowski, with thanks 

\hfill for many years of nice mathematics}

\section{Introduction}
The aim of this note is to settle some problems left open in \cite{GHN}. 
Suppose we have a Banach  space $X$ with a normalised basis $(x_n)_{n=1}^\infty$. For $x=\sum_{n=1}^\infty a_nx_n\in X$ and $N=1,2,\dots$ 
we define a non-linear operator 
\begin{equation} \label{op}
 \cG_n(x)=\sum_{n\in \Lambda_N} a_n x_n
\end{equation}
where $\Lambda_N$ is any $N$-element subset of indices such that $\min_{n\in \Lambda_N} |a_n|\geq \max_{n\notin \Lambda_N} |a_n|$. Note
that the set $\Lambda_N$ may not be uniquely defined; in such a  case we are allowed to take arbitrary choice.
This is a theoretical model of many practically important tresholding operators. Systematic study of such operators was undertaken in the
last years of the XX century (see e.g.
\cite{T1,KT,W1}) and is an active area of research. It became apparent already in  \cite{W1} that quantities like $\|\sum_{n\in A} x_n\|$
are important for the
properties of this operator. The basis is called {\em democratic} \cite{KT} if those quantities depend essentially only on number of
elements
of $A$, more precisely if there exists a constant $C$ such that for all sets $A,B$ with $\#A=\#B$ we have
\begin{equation}\label{KT10}
 \|\sum_{n\in A} x_n\|\leq C\|\sum_{n\in B} x_n\|
\end{equation}
The main result of \cite{KT} asserts that a basis is unconditional and democratic if and only if it is {\em greedy} what means that
$\cG_N(x)$ is (up to a constant) a best $N$--term approximation of $x$ by elemets
$\{x_n\}_{n=1}^\infty$; more precisely there exists a constant $C$ such that for all $x\in X$ and $N=1,2,\dots $ we have  $\| x- \cG_N(x) \|
< C \sigma_N(x)$, ($\sigma_N$ is defined in (\ref{sigma})). 

A more detailed study resulted in the definition \cite{KaT} of the  left democracy function 
\begin{equation} \label{LD}
 h_l(N)=\inf_{\#\Lambda=N}\left\|\sum_{n\in \Lambda_N} x_n\right\|
\end{equation}
and right democracy function
\begin{equation} \label{RD}
 h_r(N)=\sup_{\#\Lambda=N}\left\|\sum_{n\in \Lambda_N} x_n\right\|.
\end{equation}
The detailed study of the role of those functions in approximation properties of the basis $(x_n)_{n=1}^\infty$ was recently undertaken in
\cite{GHN}

In the rest of this note we will always assume that $(x_n)_{n=1}^\infty$ is a lattice unconditional basis i.e. 
\begin{equation}\label{uncond}
 \left\|\sum_{n=1}^\infty \lambda_na_n x_n\right\|\leq  \left\|\sum_{n=1}^\infty a_n x_n\right\|
\end{equation}
whenever $|\lambda_n|\leq 1$. Since every  space with an unconditional basis can be renormed so that the basis will satisfy
(\ref{uncond}) we really consider unconditional bases here. We will use standard Banach space conventions and results, c.f. \cite{W2}.

{\bf Acknowledgements:} I would like to express my gratitude to professors  C. Cabrelli, G. Garrig\'os, E. Hernandez and U. Molter
for sharing their ideas with me and for  kind permission to present some of their unpublished results in this paper.

\section{Space with nondoubling left democracy function.}

A function positive $\phi(n)$ defined for $n=1,2,\dots$ is doubling if there exists a $C$ such that $\phi(2n)\leq C\phi(n)$ for all $n$
Such functions appear in many places in analysis. It was observed in \cite[Prop. 2.4]{GHN} that $h_r(N)$ is doubling and that both $h_l$
and $h_r$ are increasing.  The question if $h_l$ is always doubling was left open \cite[Remark 2.5]{GHN} and in some results an assumption
that $h_l$ is doubling appears.

Now we are ready to state one of the main results of this note

\begin{theorem}\label{PW1}
 There exists a Banach space $X$ with the basis $(e_j)_{j=1}^\infty$ (satisfying (\ref{uncond})) such that the left democracy function $h_l$
of this basis is not doubling.
\end{theorem}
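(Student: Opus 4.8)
The plan is to build $X$ as a carefully weighted direct sum of finite-dimensional blocks, each block designed so that the left democracy function jumps abruptly between consecutive scales. The key point is that $h_l(N)$ is an infimum, so to make $h_l(2N)$ much larger than $h_l(N)$ we need a block structure in which \emph{every} $2N$-element set is forced to be expensive, while \emph{some} $N$-element set stays cheap. I would take $X=\bigl(\sum_k F_k\bigr)_{c_0}$ or an $\ell_1$-type sum, where each $F_k$ is a space of dimension $n_k$ (with $n_k$ growing very fast, say $n_k=2^{2^k}$) equipped with a norm that is essentially $\ell_1$ on "small" subsets of coordinates and essentially $\ell_\infty$ (i.e. the max norm, rescaled by a large weight $w_k$) on "large" subsets. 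Concretely, on $F_k$ one can use a norm like $\|a\|_{F_k}=\max\bigl(\|a\|_\infty,\ w_k^{-1}\|a\|_1\bigr)$ or a Tsirelson-style mixed norm; the outer $c_0$ (or $\ell_1$) sum is chosen so that the unconditionality condition \eqref{uncond} holds automatically, since all the norms in sight are lattice norms in the canonical basis.

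The main steps: (i) fix the block sizes $n_k$ and the weights $w_k$ so that within block $F_k$, selecting up to $m_k$ coordinates costs only $O(1)$ (the $\ell_\infty$ part dominates and each coordinate has norm $1$), whereas selecting more than $m_k$ coordinates costs on the order of $m/w_k$ — choose the thresholds $m_k$ and weights so that $h_l$ is flat on a long interval and then leaps. (ii) Verify $h_l(N)$: given any $N$-element set $\Lambda$, distribute it among the blocks; the cheapest choice concentrates coordinates in a single block just below its threshold, giving $h_l(N)\asymp 1$ for a range of $N$, and then once $N$ exceeds the largest available sub-threshold total, one is forced to pay a weighted $\ell_1$ price in some block, making $h_l$ jump. (iii) Arrange the parameters along a sequence $N_j$ with $h_l(2N_j)/h_l(N_j)\to\infty$, which contradicts doubling. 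The sum should be organised (e.g.\ via an $\ell_1$ outer norm with rapidly decaying block weights, or a $c_0$ outer norm) so that $h_r$ stays finite at each $N$ and the basis remains normalised.

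The hard part will be step (ii): controlling the \emph{infimum} over all $N$-element subsets simultaneously across infinitely many blocks, so that no clever spreading of indices across several blocks undercuts the intended jump. One must ensure that splitting $N$ coordinates among many blocks is never cheaper than the single-block strategy — this forces a subadditivity/convexity condition on the block norms and on how the outer sum aggregates them (an $\ell_1$ outer sum is safest here, since it penalises spreading). A secondary subtlety is checking that the resulting $X$ is genuinely a Banach space with a normalised unconditional basis and that $h_l$ is still \emph{increasing} (as it must be by \cite[Prop.~2.4]{GHN}), so the "jump" is an acceleration of growth, not a decrease; the non-doubling is the statement $\limsup_N h_l(2N)/h_l(N)=\infty$, which the block construction delivers by design once the parameters $n_k, w_k, m_k$ are tuned.
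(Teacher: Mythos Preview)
Your overall framework---a direct sum of finite-dimensional lattice blocks with carefully tuned local norms---is the right kind of construction, and it is exactly what the paper does. But the specific block profile you propose points in the wrong direction, and with it step~(ii) cannot be closed.

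You suggest blocks on which the norm of a set of $m$ coordinates is $\max(1,m/w_k)$: \emph{constant for small $m$, then growing}. With this choice the cheap sets are the \emph{small} ones, and that is fatal for controlling the infimum. With a $c_0$ outer sum you spread any $\Lambda$ over many blocks so that each piece sits below threshold, giving $h_l\equiv 1$. With an $\ell_1$ outer sum you instead concentrate $\Lambda$ in a single large enough block: if $w_k\to\infty$ you again get $h_l\equiv 1$, while if the $w_k$ stay bounded by $W$ one checks that $h_l(N)\sim N/W$ for $N>W$. In every case $h_l$ is doubling. Your ``hard part~(ii)'' is not merely hard with this block profile---it is impossible, because there is always a cheap placement for sets of \emph{every} size.

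The paper reverses the saturation. Each block $\cX(n_k,n_{k+1},2)$ carries the norm ``$\ell_2$ of the $n_k$ largest coefficients'', so that $\|\sum_{j\in\Gamma}e_j\|=\sqrt{\#\Gamma}$ for $\#\Gamma\le n_k$ and $=\sqrt{n_k}$ thereafter: the norm \emph{grows} on small sets and \emph{freezes} once the set is large. The outer sum is $\ell_2$. Now the cheap $n_{k+1}$-set is the \emph{entire} block $Y_k$ (fully saturated, norm $\sqrt{n_k}$), so $h_l(n_{k+1})\le\sqrt{n_k}$. On the other hand, since $\sum_{j\le k}\#Y_j\le\tfrac{4}{3}n_{k+1}$, any set of size $2n_{k+1}$ must place at least $\tfrac{2}{3}n_{k+1}$ indices into blocks $Y_j$ with $j>k$; there $n_j\ge n_{k+1}$, so those pieces are still in the $\sqrt{\#\Gamma}$ regime, and the $\ell_2$ outer sum yields $h_l(2n_{k+1})\ge\sqrt{\tfrac{2}{3}\,n_{k+1}}$. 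Taking $n_{k+1}/n_k=a_{k+1}\to\infty$ makes the ratio blow up. The moral: the cheap witness for $h_l(N)$ should be a \emph{saturated full block}, not a sub-threshold fragment, so that doubling its size forces spillover into the unsaturated regime of higher blocks where the norm is still genuinely additive.
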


We will say that the basis $(x_n)_{n=1}$ is  $1$-symmetric if for every permutation of indices $\pi$ and all sequences $(\epsilon_n)_{n=1}$
of numbers with absolute value one  and all sequences $(a_n)_{n=1}$ of coefficients we have
\begin{equation}
 \left\|\sum_{n=1} a_n x_n\right\| =\left\|\sum_{n=1}\epsilon_n a_n x_{\pi(n)}\right\|.
\end{equation}

For natural numbers $n\leq N$ let $\cX(n,N,2)$ be
a  Banach space with $1$-symmetric basis $(e_\mu)_{\mu=1}^N$ such that
$$\|\sum_{j\in \Gamma} e_j\|=\begin{cases}
                              \sqrt{\#\Gamma}& \mbox{ when } \#\Gamma \leq n\\
                               \sqrt{n}& \mbox{ when } \#\Gamma>n.
                             \end{cases}
$$
One example of such a space can be defined as
$$\|\sum_{j=1}^N x_j e_j\|:=\sup\left\{ \sum_{j\in \Gamma } x_j v_j\ \ \right\}$$
where the supremem is taken over all subsets $\Gamma\subset\{1,\dots, N\}$ of cardinality $\leq n$ and all sequences
$(v_j)_{j\in \Gamma}$ with $\sum_{j\in \Gamma}|v_j|^2\leq 1$. It is easy to see that it is a norm and the norm of a vector is the $\ell_2$
norm of its $n$ biggest (up to absolute value) coefficients. It also immediately follows from the definition that it is $1$-symmetric.

Given an increasing sequence of natural numbers $a_j$ for $j=1,2,\dots $ with $a_1\geq 4$ and $\lim_{j\to
\infty}a_j=\infty$ we define $n_k=\prod_{j=1}^k a_j$. 
This implies $n_{k+1}/n_k\geq4$

Now let us define the space 
$$\cX=:\left(\sum_{k=1}^\infty\cX(n_k,n_{k+1},2)\right)_2.$$
This space has a natural basis $(e_\mu)_{\mu\in Y}$ where $Y=\bigcup_{k=1}^\infty Y_k$ where $\#Y_k=n_{k+1}$ and $\span
(e_\mu)_{\mu\in Y_k}=\cX(n_k,n_{k+1},2)$.

\begin{lemma}
 For the space $\cX$ defined above the function $h_l(n)$ is not doubling. 
\end{lemma}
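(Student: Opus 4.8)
The plan is to reduce the computation of $h_l$ for $\cX$ to a purely combinatorial minimization and then exhibit indices along which the doubling inequality fails. First I would record the shape of the norm of a coordinate sum. Given a finite $\Lambda\subset Y$, write $m_k=\#(\Lambda\cap Y_k)$; by the $\ell_2$-sum structure of $\cX$ and the defining norm of $\cX(n_k,n_{k+1},2)$ applied to the all-ones vector on $\Lambda\cap Y_k$,
\[
\Bigl\|\sum_{\mu\in\Lambda}e_\mu\Bigr\|_\cX=\Bigl(\sum_{k\ge1}\min(m_k,n_k)\Bigr)^{1/2}.
\]
The right-hand side depends on $\Lambda$ only through the tuple $(m_k)$, which ranges over all sequences of non-negative integers with $m_k\le\#Y_k=n_{k+1}$ and $\sum_k m_k=\#\Lambda$. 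Hence $h_l(N)=\sqrt{F(N)}$ with
\[
F(N)=\min\Bigl\{\sum_{k\ge1}\min(m_k,n_k):\ 0\le m_k\le n_{k+1}\ \text{integers},\ \sum_k m_k=N\Bigr\}.
\]

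Next I would prove two one-sided estimates for $F$. The upper estimate is immediate: for $N\le n_{k+1}$ one may place all $N$ indices inside $Y_k$, at cost $\min(N,n_k)\le n_k$, so $F(N)\le n_k$; in particular $h_l(n_{k+1})\le\sqrt{n_k}$. For the lower estimate I claim $F(2n_{k+1})\ge n_{k+1}$. Fix an admissible tuple $(m_j)$ with $\sum_j m_j=2n_{k+1}$ and call $j$ \emph{active} if $m_j>n_j$. Since $\min(m_j,n_j)=m_j-(m_j-n_j)_+$ and $(m_j-n_j)_+\le n_{j+1}-n_j$, the cost equals $2n_{k+1}-\sum_{j\text{ active}}(m_j-n_j)\ge 2n_{k+1}-\sum_{j\text{ active}}(n_{j+1}-n_j)$. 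If some active $j$ satisfies $j\ge k+1$, the cost is already at least $\min(m_j,n_j)=n_j\ge n_{k+1}$. Otherwise every active $j$ lies in $\{1,\dots,k\}$ and, telescoping, $\sum_{j\text{ active}}(n_{j+1}-n_j)\le\sum_{j=1}^k(n_{j+1}-n_j)=n_{k+1}-n_1<n_{k+1}$, whence the cost is $>2n_{k+1}-n_{k+1}=n_{k+1}$. Either way $F(2n_{k+1})\ge n_{k+1}$, so $h_l(2n_{k+1})\ge\sqrt{n_{k+1}}$.

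Combining the two estimates, $h_l(2n_{k+1})/h_l(n_{k+1})\ge\sqrt{n_{k+1}/n_k}=\sqrt{a_{k+1}}\to\infty$ as $k\to\infty$, since $a_{k+1}\to\infty$ by hypothesis. Thus no constant $C$ can make $h_l(2N)\le C\,h_l(N)$ hold for all $N$, i.e. $h_l$ is not doubling. The only step that is not a direct unwinding of definitions is the lower bound for $F(2n_{k+1})$: one has to see that loading mass into many low-threshold blocks cannot free up more than the telescoped amount $n_{k+1}-n_1$ of coordinates, and that any overflow into a block of index $\ge k+1$ costs a full $n_{k+1}$; the strict monotonicity $n_1<n_2<\cdots$ is exactly what drives both halves of this dichotomy. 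I expect this counting argument — getting the dichotomy clean and the telescoping exact — to be the main, though modest, obstacle.
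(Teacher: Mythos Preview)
Your proof is correct and follows the same overall strategy as the paper: bound $h_l(n_{k+1})$ from above by packing all indices into $Y_k$, bound $h_l(2n_{k+1})$ from below, and take the ratio to get $\sqrt{a_{k+1}}\to\infty$. The only difference is in how the lower bound is obtained. The paper argues by a pigeonhole on the total size of the low blocks: since $\#\bigcup_{j\le k}Y_j\le\tfrac{4}{3}n_{k+1}$, any set of $2n_{k+1}$ indices must place at least $\tfrac{2}{3}n_{k+1}$ of them into blocks $Y_s$ with $s\ge k+1$, where each index contributes $1$ to the square of the norm; this yields $h_l(2n_{k+1})\ge\sqrt{\tfrac{2}{3}n_{k+1}}$. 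Your telescoping/active-index accounting is a bit more careful and gives the sharper $h_l(2n_{k+1})\ge\sqrt{n_{k+1}}$, at the cost of a short case split; either version suffices, and your formulation via $F(N)$ makes the combinatorics fully explicit.
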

\begin{proof} We will show that  $\sup_n\frac{h_l(2n)}{h_l(n)}=\infty$. Let us take $\Gamma$ with $\#\Gamma=n_{k+1}$. If $\Gamma=Y_k$  we
get
$\|\sum_{j\in \Gamma}e_j\|=\sqrt{n_k}$ so $h_l(n_{k+1})\leq \sqrt{n_k}$.

Now let us take $\Gamma$ with $\#\Gamma=2n_{k+1}$. We have
\begin{eqnarray}
 \# \bigcup_{j=1}^kY_j &=& n_2+n_3+\dots+n_k+n_{k+1}\\
&\leq& n_{k+1}(1+\frac14+\frac{1}{4^2}+\frac{1}{4^{k-1}})\leq \frac43n_{k+1}.
\end{eqnarray}
This means that at least $\frac23 n_{k+1}$ elements from $\Gamma$  are in 
$ \bigcup_{j=k+1}^\infty Y_j. $
Let $\Gamma^1$ be a fixed set of  such elements with  $\frac23 n_{k+1}\leq \#\Gamma^1\leq n_{k+1} $ and let us write
$\Gamma^1=\bigcup_{s=k+1}^\infty A_s$ where $A_s=\Gamma^1\cap Y_s$. Since each
of $A_s$'s has at most $n_{k+1}$ elements
 we get
\begin{eqnarray}
 \|\sum_{j\in \Gamma} e_j\|&\geq &\|\sum_{j\in \Gamma^1} e_j\|=\sqrt{\sum_{s=k+1}^\infty\|\sum_{j\in A_s}e_j\|^2  }\\
&=&\sqrt{ \sum_{s=k+1}^\infty \# A_s }=\sqrt{\#\Gamma^1}\geq \sqrt{\frac23 n_{k+1}}.
\end{eqnarray}
So $h_l(2n_{k+1})\geq \sqrt{\frac23 n_{k+1}}$ and we get
$$\frac{h_l(2n_{k+1})}{h_l(n_{k+1})}\geq\frac{\sqrt{\frac23 n_{k+1}}}{\sqrt{n_k}}=\sqrt{\frac23}\sqrt{a_{k+1}}  $$
Since $a_k$ tends to infinity we get the claim.
\end{proof}

\begin{rem}
 A more careful analysis should show that $h_l(n)$ is exactly equal to the norm of the sum of the first $n$ unit
vectors.
\end{rem}
\begin{rem}
 Clearly we can use other values of $p$ in place of $2$. 
\end{rem}

\section{Approximation spaces}

It is standard in approximation theory to define spaces of elements which admit some rate of approximation. In our context two spaces are
esential. We define them for a fixed Banach space with the basis $(x_n)_{n=1}^\infty$.
\begin{enumerate}
 \item Non-linear approximation space $\cA^\alpha_q$ with $\alpha>0$ and $0<q<\infty$ defined as
$$\cA_q^\alpha=\left\{ x\in X \ :\ \|x\|_{\cA_q^\alpha} =\|x\|+\left[\sum_{N=1}^\infty(N^\alpha
\sigma_N(x))^q\tfrac1N\right]^{1/q}<\infty\right\}$$ 
and for $q=\infty$ we define
$$\cA_\infty^\alpha=\left\{ x\in X \ :\ \|x\|_{\cA_q^\alpha} =\|x\|+\sup_{N\geq 1}N^\alpha \sigma_N(x)<\infty\right\}$$ 
$\sigma_N(x)$  is the error of the best $N$-term approximation i.e.
\begin{equation}\label{sigma}\sigma_N(x)=\inf\{\|x-\sum_{n\in \Lambda} b_nx_n\|\  :\ \#\Lambda=N  \mbox{  and $b_n$'s are
arbitrary}\}\end{equation}
\item Greedy classes $\cG_q^\alpha$ are defined in the same way but we replace $\sigma_N(x)$ by error of a greedy approximation which is
defined as $\gamma_N(x)=\max\|x-\cG_N(x)\|$. The maximum is taken over all $\cG_N(x)$'s in case it is not uniquely defined.
\end{enumerate}
It is well known that $\cA_q^\alpha$ are quasi-Banach spaces with the quasi-norm $\|.\|_{\cA_q^\alpha}$. For the spaces $\cG_q^\alpha$ the
situation is not so clear--we do not know if it is a linear space. Clearly if the basis is greedy then $\sigma_N\sim \gamma_N$ and the
spaces are equal. Also, since always
$\sigma_N(x)\leq \gamma_N(x)$, we have $\cG_q^\alpha\subset \cA_q^\alpha$.   The problem wether the equality $\cG_q^\alpha=
\cA_q^\alpha$  characterise  greedy bases was considered in \cite{GHN}. Actually it turned out to be quite difficult so the authors
considered the problem of equivalence of quantities $\|x\|_{\cA_q^\alpha}$ and $\|x\|_{\cG_q^\alpha}$. Let us say that {\em greedy
approximation is optimal}\footnote{In \cite{GHN} this notion was expressed as ''the  inclusion  $\cA_q^\alpha\hookrightarrow\cG_q^\alpha
$ does not hold''.} for $\alpha$ and $q$ if there exists a constant $C$ such that for every $x\in{\cA_q^\alpha}$ we have 
$$\|x\|_{\cG_q^\alpha}  \leq C\|x\|_{ \cA_q^\alpha} .$$

The main result of this section is the following
\begin{theorem}\label{optimal}
If $(x_n)$ in unconditional, the following are equivalent
\begin{enumerate}
 \item $(x_n)$ is democratic
\item $\| x- \cG_N(x) \| < C \sigma_N(x)$  for all $x$
\item $\| x \|_{\cG^\alpha_q} < C \|  x \|_{\cA^\alpha_q}$ for all (some) $\alpha,q>0$.
\end{enumerate}
 
\end{theorem}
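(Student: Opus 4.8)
The plan is to establish the cycle $(1)\Rightarrow(2)\Rightarrow(3,\ \text{for all }\alpha,q)\Rightarrow(3,\ \text{for some }\alpha,q)\Rightarrow(1)$. The step $(1)\Rightarrow(2)$ is the Konyagin--Temlyakov characterisation of greedy bases recalled in the introduction (the main theorem of \cite{KT}): for an unconditional basis, democracy forces $\|x-\cG_N(x)\|\le C\sigma_N(x)$ for every $x$ and every admissible greedy selection. Now $(2)$ says exactly that $\gamma_N(x)\le C\sigma_N(x)$ for all $x,N$ with uniform $C$ (here $\gamma_N$ is the maximum over greedy choices), so substituting this pointwise bound into the two defining sums immediately yields $\|x\|_{\cG_q^\alpha}\le C'\|x\|_{\cA_q^\alpha}$ simultaneously for all $\alpha,q>0$, which is $(3)$ in the ``for all'' form; ``for all'' trivially implies ``for some''. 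Hence the real content is $(3,\ \text{for some }\alpha,q)\Rightarrow(1)$, which I would prove contrapositively: assuming $(x_n)$ is \emph{not} democratic, I would construct, for any prescribed $\alpha,q$, an element $x\in\cA_q^\alpha\setminus\cG_q^\alpha$.

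Non-democracy means $\sup_N h_r(N)/h_l(N)=\infty$, and since $h_r$ is doubling and $h_l$ increasing (\cite[Prop.~2.4]{GHN}), a short pigeonhole argument upgrades this to: for every $K$ there are arbitrarily large $N$ and a companion $M\ge KN$ with $h_l(M)\le K^{-1}h_r(N)$ --- i.e. there is a set much \emph{larger} than a given $N$-set whose sum has much \emph{smaller} norm. (Without doubling of $h_r$ this upgrade can fail, and indeed $h_l$ itself need not be doubling, by Theorem~\ref{PW1}.) I would then choose rapidly growing scales and pairwise disjoint index sets: for each $j$ a set $A_j$ with $\#A_j=N_j$ and $\|\sum_{n\in A_j}x_n\|\ge\tfrac12 h_r(N_j)$, and a set $B_j$ with $\#B_j=M_j\gg N_j$ and $\|\sum_{n\in B_j}x_n\|\le 4^{-j}h_r(N_j)$; disjointness across $j$ is arranged by swapping out the finitely many indices already used, which costs little once the scales grow fast enough. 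Set
$$x=\sum_{j\ge1}\Big(\lambda_j\sum_{n\in A_j}x_n+2\lambda_j\sum_{n\in B_j}x_n\Big),$$
with $\lambda_j\downarrow0$ fast enough that the blocks do not interleave in the greedy order and $\sum_j\lambda_j h_r(N_j)<\infty$ (so $x\in X$).

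The point is that the $B_j$-coordinates carry the larger coefficient $2\lambda_j$, so while the greedy algorithm exhausts block $j$ it first selects all of $B_j$; on that entire run of $\approx M_j$ consecutive orders $k$ the greedy remainder still contains the full chunk $\lambda_j\sum_{n\in A_j}x_n$, and by (\ref{uncond}) $\gamma_k(x)\ge\tfrac12\lambda_j h_r(N_j)$ there. On the same orders a competing $k$-term approximant instead spends $N_j$ of its terms to \emph{remove $A_j$ entirely} and the rest on $B_j$-coordinates, so $\sigma_k(x)\lesssim 2\lambda_j\|\sum_{n\in B_j}x_n\|+(\text{tail})\lesssim 4^{-j}\lambda_j h_r(N_j)$ for all $k$ in a sub-run of length $\approx M_j$. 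Thus the greedy error stays large up to order $\approx M_j$ while the best error collapses once the order passes $\approx N_j$: block $j$ contributes $\approx M_j^{\alpha q}\big(\lambda_j h_r(N_j)\big)^q$ to $\|x\|_{\cG_q^\alpha}^q$ but only $\lesssim 4^{-jq}M_j^{\alpha q}\big(\lambda_j h_r(N_j)\big)^q$ to $\|x\|_{\cA_q^\alpha}^q$. Choosing $\lambda_j$ so that $M_j^{\alpha q}(\lambda_j h_r(N_j))^q=2^{jq}/j$ makes the first sum $\sum_j 2^{jq}/j$ diverge and the second sum $\sum_j 2^{-jq}/j$ converge; since $M_j$ may be taken as large as we like relative to $N_j$, this is compatible with $\sum_j\lambda_j h_r(N_j)<\infty$ and with the separation requirements. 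Hence $x\in\cA_q^\alpha\setminus\cG_q^\alpha$, contradicting $(3)$.

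The main obstacle is not this basic dichotomy but the bookkeeping that makes it survive integration against $N\mapsto N^{\alpha q-1}$: one must (a) secure scales with $M_j\gg N_j$ while $h_l(M_j)$ stays small --- this is exactly where doubling of $h_r$ enters --- and (b) check that all the competing ranges of $k$ (the orders where greedy is filling $A_j$, the short intervals near the block boundaries, and, for $k$ below $\approx N_j$, the orders where $A_j$ has not yet been fully removed) contribute negligibly to \emph{both} quasi-norms. The last of these is the only place where the possible ``inefficiency'' of the near-extremal set $A_j$ (all its large subsets having norm $\sim h_r(N_j)$) is felt, and it is harmless precisely because on the decisive run the approximant deletes $A_j$ wholesale rather than trying to thin it.
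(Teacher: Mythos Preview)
Your argument is correct in outline and takes a genuinely different route from the paper. The paper proves $\neg(1)\Rightarrow\neg(3)$ by splitting into two cases according to whether $h_l$ is doubling: when it is, Lemma~\ref{CGHM} (which \emph{requires} $h_l$ doubling) feeds into the quoted Proposition~\ref{GHN7.1} from \cite{GHN}; when $h_l$ is not doubling, a direct computation on single-block test vectors $x_s=\sum_{V^s}e_j+2\sum_{M_s}e_j$ shows $\|x_s\|_{\cA_q^\alpha}/\|x_s\|_{\cG_q^\alpha}\to0$. In both branches the conclusion is only that the ratio is unbounded over a \emph{sequence} of elements, i.e.\ that no uniform constant in (3) exists.

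Your approach is unified and delivers more. You observe that the needed scale separation --- for every $K$ there exist arbitrarily large $N$ and $M\ge KN$ with $h_l(M)\le K^{-1}h_r(N)$ --- follows from the doubling of $h_r$ alone: pick $M$ with $h_r(M)/h_l(M)$ enormous and set $N=\lfloor M/K\rfloor$, so that $h_r(N)\ge C_{\rm dbl}^{-\lceil\log_2 2K\rceil}h_r(M)$. Since $h_r$ is \emph{always} doubling, the case split disappears; this is a genuine simplification of Lemma~\ref{CGHM}. You then splice infinitely many such pairs into a single element $x\in\cA_q^\alpha\setminus\cG_q^\alpha$, which is strictly stronger than the paper's conclusion and incidentally speaks to the question, raised in the paper, of whether $\cG_q^\alpha$ can differ from $\cA_q^\alpha$ as a set. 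The price is heavier bookkeeping: the constraints $M_j/N_j\gtrsim 4^{j/\alpha}$, $M_j\gg M_{j-1}$, $\lambda_j>2\lambda_{j+1}$, and $\sum_{i>j}\lambda_i h_r(N_i)\lesssim 4^{-j}\lambda_j h_r(N_j)$ must hold simultaneously, and the disjointness swap for $A_j$ relies on subadditivity $\|\textstyle\sum_{A_0}\|\le\|\sum_{A_0\setminus F}\|+\#F$ together with $h_r(N_j)\to\infty$ (which does follow from non-democracy, since $h_l\ge1$). These details are routine but not written out in your sketch.
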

\begin{rem}
 This Theorem for bases with doubling $h_l$ was proved by  C. Cabrelli, G. Garrigós, E. Hernandez and U. Molter and stated without proof in
a note {\em Added in proof} in \cite{GHN}. Below I present their proof with their kind permission.
\end{rem}

\begin{proof} That for unconditional bases 1. is equivalent to 2. was proved by Konyagin--Temlyakov \cite{KT}. $2.\Rightarrow 3.$ is clear
and was already mentioned above. We will prove that for a non-greedy unconditional basis 3. fails.
 We will distinguish two cases: when $h_l$ is doubling and when $h_l$ is not doubling. To prove the first case we need to recall
 Proposition 7.1 from \cite{GHN} 
\begin{prop}\label{GHN7.1}
Suppose that there exist integers $n_\mu\geq k_\mu\geq 1$ for $\mu=1,2,\dots$ such that 
\begin{equation}\label{7.1}
\lim_{\mu\to \infty}\frac{n_\mu}{k_\mu}=\infty \\\ \mbox{ and }\ \ \ \  \frac{h_r(k_\mu)}{h_l(n_\mu)}\geq
C\left(\frac{n_\mu}{k_\mu}\right)^\alpha
\end{equation}
 for some $C>0$ and $\alpha>0$. Then greedy approximation is not optimal for $\alpha $ and any $q\in(0,\infty]$.
\end{prop}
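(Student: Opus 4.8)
The plan is to disprove optimality directly: for each $\mu$ I will produce a single vector $x^{(\mu)}$ whose greedy quasi-norm $\|x^{(\mu)}\|_{\cG_q^\alpha}$ exceeds its approximation quasi-norm $\|x^{(\mu)}\|_{\cA_q^\alpha}$ by a factor tending to infinity along $\mu$; since this precludes any uniform constant $C$, greedy approximation is not optimal. Fix $\epsilon>0$ and restrict to $\mu$ so large that $n_\mu/k_\mu$ is huge. Using \eref{RD} and \eref{LD}, pick an ``expensive'' block $A_\mu$ with $\#A_\mu=k_\mu$ and $\|\sum_{i\in A_\mu}x_i\|\geq(1-\epsilon)h_r(k_\mu)$, and a ``cheap'' block $B_\mu$ with $\#B_\mu=n_\mu$ and $\|\sum_{j\in B_\mu}x_j\|\leq(1+\epsilon)h_l(n_\mu)$, and let $x^{(\mu)}$ have all coefficients equal to one on $A_\mu\cup B_\mu$. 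I will \emph{not} insist that the blocks be disjoint: since $n_\mu/k_\mu\to\infty$, the overlap $D_\mu=A_\mu\cap B_\mu$ satisfies $\#D_\mu\leq k_\mu\leq n_\mu/2$ for large $\mu$, which is all the estimates require.

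For the greedy lower bound, note that since all coefficients of $x^{(\mu)}$ are equal, every $N$-element subset of $A_\mu\cup B_\mu$ is a legitimate greedy selection. For $1\leq N\leq n_\mu-\#D_\mu$ I retain $N$ coordinates lying inside $B_\mu\setminus A_\mu$; the residual then contains all of $A_\mu$, so by lattice unconditionality \eref{uncond} one gets $\|x^{(\mu)}-\cG_N(x^{(\mu)})\|\geq\|\sum_{i\in A_\mu}x_i\|\geq(1-\epsilon)h_r(k_\mu)$, whence $\gamma_N(x^{(\mu)})\geq(1-\epsilon)h_r(k_\mu)$ on this whole range. Because the range has length $\gtrsim n_\mu$, summing $N^{\alpha q-1}$ over it yields $\|x^{(\mu)}\|_{\cG_q^\alpha}\gtrsim h_r(k_\mu)\,n_\mu^\alpha$, with the obvious $\sup$-version when $q=\infty$, the implied constant depending only on $\alpha,q$.

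For the approximation upper bound I estimate $\sigma_N(x^{(\mu)})$ in two regimes. When $N<k_\mu$ I use only $\sigma_N(x^{(\mu)})\leq\|x^{(\mu)}\|\leq\|\sum_{i\in A_\mu}x_i\|+\|\sum_{j\in B_\mu}x_j\|\lesssim h_r(k_\mu)$, the second summand being $\lesssim h_l(n_\mu)\leq h_r(k_\mu)$ by \eref{7.1} for large $\mu$; summing $N^{\alpha q-1}$ over $N<k_\mu$ contributes $\lesssim h_r(k_\mu)k_\mu^\alpha$. When $N\geq k_\mu$ I delete the whole expensive block $A_\mu$ together with $N-k_\mu$ further coordinates, leaving a residual contained in $B_\mu$, so $\sigma_N(x^{(\mu)})\lesssim h_l(n_\mu)$; summing over $k_\mu\leq N\leq\#(A_\mu\cup B_\mu)$ contributes $\lesssim h_l(n_\mu)n_\mu^\alpha$ (and $\sigma_N=0$ beyond the support). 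The hypothesis \eref{7.1} is invoked precisely here: it states exactly $h_l(n_\mu)n_\mu^\alpha\lesssim h_r(k_\mu)k_\mu^\alpha$, so both regimes are $\lesssim h_r(k_\mu)k_\mu^\alpha$ and hence $\|x^{(\mu)}\|_{\cA_q^\alpha}\lesssim h_r(k_\mu)k_\mu^\alpha$. Combining the two bounds gives $\|x^{(\mu)}\|_{\cG_q^\alpha}/\|x^{(\mu)}\|_{\cA_q^\alpha}\gtrsim(n_\mu/k_\mu)^\alpha\to\infty$, which is the assertion.

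The main obstacle I anticipate is the approximation upper bound: one must check that the small scales $N<k_\mu$, where $\sigma_N$ is as large as $\|x^{(\mu)}\|\approx h_r(k_\mu)$, do not already generate a term of size $n_\mu^\alpha h_r(k_\mu)$ matching the greedy norm. This is exactly why the threshold $N=k_\mu$ is decisive — it is the number of coordinates one must spend to erase the expensive block, after which $\sigma_N$ collapses from the scale $h_r(k_\mu)$ to the cheap scale $h_l(n_\mu)$ — and why \eref{7.1} is needed precisely to force the resulting large-$N$ contribution $h_l(n_\mu)n_\mu^\alpha$ to be dominated by $h_r(k_\mu)k_\mu^\alpha$. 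A secondary technical point, already absorbed above, is that one need not arrange $A_\mu$ and $B_\mu$ to be disjoint, since their overlap is at most $k_\mu$ and therefore negligible against $n_\mu$ in both estimates.
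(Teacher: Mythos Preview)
The paper does not actually prove this proposition; it is quoted from \cite{GHN} (Proposition~7.1 there) and used as a black box in the first case of the proof of Theorem~\ref{optimal}. Your argument is correct and is essentially the standard one: build a test vector from an ``expensive'' block of size $k_\mu$ realizing $h_r(k_\mu)$ and a ``cheap'' block of size $n_\mu$ realizing $h_l(n_\mu)$, then compare $\cG_q^\alpha$ and $\cA_q^\alpha$ norms.

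One minor stylistic difference worth noting: in the analogous construction carried out explicitly in this paper (the non-doubling case in the proof of Theorem~\ref{optimal}), the author uses distinct coefficients $1$ and $2$ on the two blocks, so that the greedy operator is \emph{forced} to select the cheap block first. You instead take all coefficients equal to $1$ and exploit that $\gamma_N$ is defined as the \emph{maximum} over all admissible greedy selections, so it suffices to exhibit one bad selection. Both devices achieve the same lower bound on $\gamma_N$; yours is slightly slicker here but relies on the specific convention for $\gamma_N$ adopted in the paper. Your handling of the possible overlap $A_\mu\cap B_\mu$ is also fine and does not affect the estimates.
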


\begin{lemma}[C. Cabrelli, G. Garrigós, E. Hernandez, U. Molter]\label{CGHM}
 Let $\alpha>0$ and $h_r,h_l:\N\rightarrow(0,\infty)$ be {\em any} two increasing functions such that $h_l$ is doubling and $\limsup_{\mu\to
\infty}\frac{h_r(\mu)}{h_l(\mu)} =\infty$. Then there exists integers  $n_\mu\geq k_\mu\geq 1$ for $\mu=1,2,\dots$ such that (\ref{7.1})
holds.
\end{lemma}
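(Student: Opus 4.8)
The plan is to take the $k_\mu$ along a subsequence realising the hypothesis and to let $n_\mu$ be as large a multiple of $k_\mu$ as the doubling of $h_l$ will permit before $h_l(n_\mu)$ overtakes $h_r(k_\mu)$. So first, from $\limsup_{\mu\to\infty}h_r(\mu)/h_l(\mu)=\infty$ I extract a strictly increasing sequence $(m_j)_{j\ge1}$ of integers with $\rho_j:=h_r(m_j)/h_l(m_j)\to\infty$, and I set $k_j:=m_j$.

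The engine of the argument is the standard fact that a doubling function grows at most polynomially. Letting $C_0\ge1$ be a doubling constant for $h_l$ and $\gamma:=\log_2 C_0\ge0$, I would first record that iterating $h_l(2n)\le C_0h_l(n)$ and using that $h_l$ is increasing gives
\[
h_l(N)\ \le\ D\,(N/k)^{\gamma}h_l(k)\qquad\text{for all }N\ge k\ge1,
\]
with $D:=2^{\gamma}$ (take $s:=\lceil\log_2(N/k)\rceil$, so that $N\le 2^sk$ and $2^s\le 2N/k$). This verification is routine.

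Then, for all $j$ large enough that $\rho_j\ge D$, I would put $n_j:=\big\lfloor k_j(\rho_j/D)^{1/(\alpha+\gamma)}\big\rfloor$; note $\alpha+\gamma\ge\alpha>0$, and $n_j\ge k_j$. The floor changes the ratio by less than $1$, so
\[
(\rho_j/D)^{1/(\alpha+\gamma)}-1\ \le\ n_j/k_j\ \le\ (\rho_j/D)^{1/(\alpha+\gamma)},
\]
whence $n_j/k_j\to\infty$ because $\rho_j\to\infty$. Feeding $N=n_j$, $k=k_j$ into the growth estimate and then using the upper bound on $n_j/k_j$,
\[
\frac{h_r(k_j)}{h_l(n_j)}\ \ge\ \frac{\rho_j}{D}\Big(\frac{n_j}{k_j}\Big)^{-\gamma}\ \ge\ \Big(\frac{n_j}{k_j}\Big)^{\alpha+\gamma}\Big(\frac{n_j}{k_j}\Big)^{-\gamma}\ =\ \Big(\frac{n_j}{k_j}\Big)^{\alpha}.
\]
Discarding the finitely many indices with $\rho_j<D$ and relabelling, the pairs $(n_\mu,k_\mu)$ satisfy \eqref{7.1} with $C=1$.

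There is no genuinely hard step: the only points that need care are the polynomial-growth bound for $h_l$ — matching the dyadic exponent $s$ to a power of $N/k$ — and the harmless effect of the floor. I would also note that monotonicity of $h_r$ is not needed; the argument uses only that $h_l$ is increasing and doubling together with the $\limsup$ hypothesis.
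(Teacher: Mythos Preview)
Your argument is correct. The core idea matches the paper's: doubling forces $h_l$ to grow at most polynomially, and once this is in hand one can take $n_\mu$ to be a suitably large multiple of $k_\mu$ so that $h_l(n_\mu)$ does not swamp $h_r(k_\mu)$. The organisation differs slightly. The paper first fixes the intended \emph{multiplier} $w_\mu$ (along the subsequence where the ratio blows up) and then chooses $k_\mu$ further along the same subsequence so that $h_r(k_\mu)/h_l(k_\mu)\ge C^{r(\mu)}w_\mu^\alpha$; setting $n_\mu=w_\mu k_\mu$ then gives the conclusion directly. You instead fix $k_j$ first and solve explicitly for $n_j$ via the polynomial bound $h_l(N)\le D(N/k)^\gamma h_l(k)$, taking $n_j\approx k_j(\rho_j/D)^{1/(\alpha+\gamma)}$. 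Your route is a little more direct and makes the growth exponent $\gamma=\log_2 C_0$ explicit; the paper's double use of the sequence $(w_\mu)$ is a touch slicker but less transparent. Your closing remark that monotonicity of $h_r$ is never used is also valid for the paper's proof.
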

\begin{proof}
 We easily see that there exists an increasing sequence of integers
$\{w_\mu\}_{\mu=1}^\infty$ such that
\begin{equation}\label{CGHM1} \lim_{\mu\to
\infty} h_r(w_\mu)/h_l(w_\mu)=\infty.
\end{equation}
 Given $w_\mu$ we fix an integer $r(\mu)$ such that $2^{r(\mu)-1}\leq w_\mu
<2^{r(\mu)}$. Since $h_l$ is doubling,  for any $M,\mu\in\N$ we have
\begin{equation}\label{CGHM2}
 h_l(w_\mu M)\leq h_l(2^{r(\mu)}M)\leq C^{r(\mu)}h_l(M).
\end{equation}
Using (\ref{CGHM1}) we fix an increasing sequence $(k_\mu)_{\mu=1}^\infty$ such that each $k_\mu$ is some $w_{\mu^\prime}$ such that
\begin{equation}\label{CGHM3}
 \frac{h_r(k_\mu)}{h_l(k_\mu)}\geq C^{r(\mu)}w_\mu^\alpha
\end{equation}
and we define $n_\mu=w_\mu k_\mu$, so the first part of (\ref{7.1}) holds . Using (\ref{CGHM2}) and (\ref{CGHM3}),    we obtain
\begin{equation*}
 \frac{h_r(k_\mu)}{h_l(n_\mu)}= \frac{h_r(k_\mu)}{h_l(w_\mu k_\mu)}\geq \frac{h_r(k_\mu)}{ C^{r(\mu)}h_l( k_\mu)}\geq
w_\mu^\alpha=\left(\frac{n_\mu}{k_\mu}\right)^\alpha
\end{equation*}
\end{proof}
To settle the first case we note that a non-greedy basis with doubling $h_l$ satisfies the assumptions of Lemma \ref{CGHM}
so using Proposition \ref{GHN7.1} we get the claim.

Now let us assume that we have a normalised, $1$--unconditional basis $(e_j)_{j=1}^\infty$ with the function $h_l(n)$ {\em not doubling.}
For each $s$ there
exists $n_s$ such that $h_l(2n_s)\geq (s+1)h_l(n_s)$. For simplicity in what follows we will write $\|S\|=\|\sum_{j\in S} e_j\|$. 
Let us fix a set $M_s$ such that $\#M_s=n_s$ and
$\|M_s\|\geq h_l(n_s)\geq\|M_s\|-\frac{1}{s+1}$. Then for any set $D$ disjoint from $M_s$ with $\#D=n_s$ we have
$$\|D\|+\|M_s\|\geq \|M_s\cup D\|\geq h_l(2n_s)\geq (s+1)h_l(n_s)\geq(s+1)(\|M_s\|-\frac{1}{s+1})$$
so for every such $D$ we have $\|D\|\geq s\|M_s\|-1=s h_l(n_s)-1$.

Note that $h_l$ is unbounded (because bounded is doubling).

Given $M_s$ let us take $r=:\lfloor\sqrt s\rfloor$ disjoint sets $V_j$ also disjoint with $M_s$, such that  $\#\bigcup_{j=1}^{r}V_j=n_s$
each of cardinality $\lfloor n_s/r\rfloor$ or $\lceil n_s/r\rceil$. Denote the set $V_j$ with the biggest $\|V_j\|$ as $V^s$.
Since 
$r\max\|V_j\|\geq \|\bigcup_{j=1}^{r} V_j\|\geq s\|M_s\|-1$ we see that 
\begin{equation}\label{LS1}
\|V^s\|\geq  \frac{s}{r}\|M_s\|-\frac1r.
\end{equation}

Put $x_s=\sum_{V^s}e_j+2\sum_{M_s}e_j$. We have $\|x_s\|\leq \|V^s\|+2\|M_s\|\leq 3\|V^s\|$. 
The number of non-zero coefficients of $x_s$ equals $\#V_s+\#M_s\leq 2\#M_s$.  In what follows  we are only interested
in
$k\leq 2\#M_s$ because for $k>2\#M_s$ we have $\cG_k(x_s)=x_s$ and $\sigma_k(x_s)=0$.

For $k\leq \#M_s$ we have
$$\|x_s-\cG_k(x_s)\|\geq\|V^s\|$$
so for $0<q<\infty$ we have
\begin{eqnarray}
 \|x_s\|_{\cG_q^\alpha}&\geq& \left[\sum_{k=1}^{\#M_s}(k^\alpha \|x_s-\cG_k(x_s)\|)^q\frac{1}{k}\right]^{1/q}\\
&\geq & C\|V^s\|(\#M_s)^\alpha.
\end{eqnarray}

and for $q=\infty$ we have
\begin{equation}\label{LS4}
 \|x_s\|_{\cG_\infty^\alpha}\geq\max_{k\leq \#M_s} k^\alpha\|x-\cG_k(x)\| \geq (\#M_s)^\alpha\|V_s\|.
\end{equation}

On the other hand for $k\geq \#V^s$ using (\ref{LS1}) we have
\begin{equation}\label{LS2}
 \sigma_k(x)\leq 2\|M_s\|\leq 2\frac{r\|V^s\|+1}{s}\leq  3\frac{r\|V^s\|}{s}
\end{equation}
and for $k<\#V^s$ 
\begin{equation}\label{LS3}
 \sigma_k(x)\leq \|x\|\leq 3\|V^s\|.
\end{equation}
Therefore using (\ref{LS3}) and (\ref{LS2}), for $q<\infty$ we have
\begin{eqnarray} \label{LS5}
\|x_s\|_{\cA_q^\alpha}&=& \|x_s\|+\left[\sum_{k=1}^{2\#M_s}(k^\alpha \sigma_k(x_s))^q\frac1k\right]^{1/q} \nonumber\\
&\leq& 
3\|V_s\|+\left[(3\|V^s\|)^q\sum_{k=1}^{\#V^s-1}k^{q\alpha-1}+\left(3\frac{r\|V^s\|}{s}\right)^q 
\sum_{k=\#V^s}^{2\#M_s}k^{q\alpha-1}\right]^{1/q}\nonumber \\
&\leq & 3\|V^s\|+\left[C\|V^s\|^q (\#V^s)^{q\alpha} +C(r/s)^q\|V^s\|^q(\#M_s)^{q\alpha}\right]^{1/q}\nonumber\\
&\leq & C(\#M_s)^\alpha\|V^s\|\left(r^{-q\alpha}+(r/s)^q\right)^{1/q}  \nonumber \\
&\leq&
C\|x_s\|_{\cG_q^\alpha}\left(r^{-q\alpha}+(r/s)^q\right)^{1/q} \nonumber \\
&\leq& C\|x_s\|_{\cG_q^\alpha}\left( s^{-q\alpha/2}+s^{-q/2}\right)^{1/q}.
\end{eqnarray}
Analogously for $q=\infty$ we have
\begin{eqnarray}\label{LS6}
 \|x_s\|_{\cA_\infty^\alpha}&=&\|x_s\|+\sup_{k\geq 1}k^\alpha\sigma_k(x_s)\nonumber \\
&\leq &3\|V^s\|+\max_{k<\#V^s}3k^\alpha \|V^s\|+\max_{\#V_s\leq k\leq 2\#M_s}3k^\alpha\frac{s\|V^s\|}{s} \nonumber \\
&\leq & \|V^s\|\left(3+3(\#V^s)^\alpha +3(2\#M_a)^\alpha r/s\right)\nonumber  \\
&\leq &C \|x_s\|_{\cG_\infty^\alpha} \left((\#M_s)^{-\alpha} +s^{-\alpha/2}+s^{-1/2}\right)
\end{eqnarray}
Since $s$ is arbitrary, from (\ref{LS5}) and (\ref{LS6}) we infer that the greedy approximation is not optimal for any $\alpha$ and $q$ also
in the nondoubling case.                                                                                                        
\end{proof}

\noindent  Interdisciplinary Centre for Mathematical and Computational Modelling,
University of Warsaw, 02-838 Warszawa,
ul. Prosta 69, Poland,\\
and
\\
Institut of Mathematics, Polish Academy of Sciences\\00-956 Warszawa, ul. \`Sniadeckich 8, Poland   \\   email: {\tt
wojtaszczyk@mimuw.edu.pl}
\end{document}